\theoremstyle{plain}
\newtheorem{theo}{Theorem}[section]
\newtheorem{prop}[theo]{Proposition}
\newtheorem{lemm}[theo]{Lemma}
\theoremstyle{definition}
\DeclareSymbolFont{pletters}{OT1}{cmr}{m}{sl}
\DeclareMathSymbol{s}{\mathalpha}{pletters}{`s}
\def\eps{\varepsilon}
\def\la{\left\lvert}
\def\mez{\frac{1}{2}}
\def\ra{\right\rvert}
\def\xC{\mathbf{C}}
\def\xR{\mathbf{R}}
\numberwithin{equation}{section}
\title  {A note on the uniqueness from sets of positive measure for time dependent parabolic operators}
\subjclass[2010]{35KXX, 35B60}
\author[N. Burq]{Nicolas Burq}
\address{Universit{\'e} Paris-Saclay, Math{\'e}matiques, UMR 8628 du CNRS, B{\^a}t 307, 91405  Orsay Cedex, France,   and Institut Universitaire de France}
\email{Nicolas.burq@universite-paris-saclay.fr}
\author[C. Zuily]{Claude Zuily}
\address{Universit{\'e} Paris-Saclay, Math{\'e}matiques, UMR 8628 du CNRS, B{\^a}t 307, 91405  Orsay Cedex, France, }
\email{Claude.zuily@universite-paris-saclay.fr}
\begin{document}
\maketitle
\begin{abstract} The purpose of this short note is to show how it is possible to combine existing results in the literature to get the unique continuation from sets of positive measure for time dependent parabolic equations with Lipschitz principal part and bounded lower order terms, result which was known in the case of analytic coefficients in \cite{EMZ}.
\end{abstract}

\section{Introduction}
The problem of unique continuation for solutions to parabolic equations has received a lot of attention starting from the pionneering work of T. Carleman in 1939. Since that time a huge number of papers have been devoted to this question.   The interested reader may consult the review paper   \cite{V} and its bibliography. Most of these works deal with the question of weak continuation from   open sets \cite{SS}, strong continuation from a point \cite{KT}, quantitative estimates etc..

Recently, motivated by   control theory, the question of unique continuation from sets of positive measure has been discussed and   several new results for elliptic equations (see \cite{LM}),   and for parabolic equations with  analytic coefficients have been proved (see \cite{EMZ}).

The purpose of this note is to handle the case of time dependent equations with Lipschitz coefficients and to show that known results in the literature can be recombined to give a positive answer to this question.

\subsection{Notations}

For $T>0, R>0$  let $ I= (-T,T) $ and $B_R = \{x\in \xR^d: \vert x \vert<R\}.$  We set $Q = I \times B_R.$

We shall consider the parabolic operator,
\begin{equation}    \label{operateur}
 P= \partial_t -\text{div}\,(A(t,x)\nabla_x) + \sum_{j=1}^d b_j(t,x)\partial_{x_j} + c(t,x),
 \end{equation}
where $A(t,x) = (a_{jk}(t,x))_{1 \leq j,k \leq d}$ is a symmetric matrix with real valued entries satisfying the ellipticity condition,
\begin{equation}\label{ellipt}
\exists \kappa >0: \text{Re}\, \langle A(t,x)  \zeta,  \zeta\rangle \geq \kappa \vert \zeta\vert^2, \quad \forall (t,x) \in Q, \quad \forall \zeta \in \xC^d,
\end{equation}
  the   $(a_{jk})$ are uniformly Lipschitz continuous in $\overline{I }\times \overline{B_R}$ and $b_j, c$ belong  to  $L^\infty(\overline{I }\times \overline{B_R}).$

Let $(t_0, x_0)\in (-T,T)\times B_R.$  We shall set, for small $r>0$,
\begin{align*}
 I_r(t_0)  = \{t\in \xR: \vert t-t_0\vert < r^2\}, &\quad B_r(x_0) = \{x\in \xR^d: \vert x-x_0\vert < r\}, \\
 \quad Q_r(t_0, x_0)&= I_r(t_0)\times B_r(x_0). 
 \end{align*}

Eventually, we shall denote by $\mu$ the Lebesgue measure on $\xR^d.$
\subsection{Main result}

The purpose of this note is to prove the following result.

\begin{theo}\label{unicite}
Let $u \in L ^2((-T,T), H ^2(B_R))$ satisfying $  Pu =0$ in $ Q.$ Assume that there exists $E \subset B_R$ with $\mu(E)>0$ such that $u(t,x) =0$ for all $(t,x)\in (-T,T) \times E.$  
Then  $u=0$ on $Q.$
\end{theo}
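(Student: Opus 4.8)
The plan is to combine three facts from the literature: the Lebesgue density theorem; the elliptic unique continuation from sets of positive measure of Logunov--Malinnikova (\cite{LM}), transported to the time slices of $u$ via the space--like quantitative unique continuation for parabolic operators with Lipschitz principal part (Escauriaza--Fern\'andez--Vessella, Koch--Tataru, cf.\ \cite{KT}); and the classical weak unique continuation for such parabolic operators (Saut--Scheurer \cite{SS}). First I would record the reductions. Since $u\in L^2(I,H^2(B_R))$ and $Pu=0$, the equation gives $\partial_t u\in L^2(Q)$, hence $u\in C(\overline I,H^1(B_R))$ and, by interior parabolic regularity, for every $t\in I$ the slice $v_t\defn u(t,\cdot)$ is a well defined element of $H^1(B_R)$ that is bounded on compact subsets of $B_R$. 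By the Lebesgue density theorem $E$ has a density point $x_0\in E\cap B_R$, and I fix $\rho_0>0$ with $\overline{B_{4\rho_0}(x_0)}\subset B_R$ and $\mu(E\cap B_{2\rho_0}(x_0))\ge\frac12\,\mu(B_{2\rho_0}(x_0))$. Then, for every $t\in I$, the function $v_t$ vanishes on $E\cap B_{2\rho_0}(x_0)$, a subset of $B_{2\rho_0}(x_0)$ of relative Lebesgue measure at least $\frac12$.

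The heart of the argument is to show that $v_t\equiv 0$ near $x_0$ for every $t$. Fix $t$ in a compact subinterval $J\Subset I$. The slice $v_t$ is \emph{not} a solution of an elliptic equation --- it satisfies $-\cn(A(t,\cdot)\nabla v_t)+b(t,\cdot)\!\cdot\!\nabla v_t+c(t,\cdot)v_t=-\partial_t u(t,\cdot)$, with nonzero right-hand side --- but it is the time slice of a solution of a parabolic equation with Lipschitz leading part, and as such it obeys, uniformly for $t\in J$, the same space--like doubling and three--ball inequalities in the $x$ variable as solutions of second order elliptic equations with Lipschitz coefficients; this is precisely the content of the space--like quantitative unique continuation theory for parabolic operators. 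Feeding these estimates into the propagation--of--smallness scheme of Logunov--Malinnikova (\cite{LM}) produces a three--ball inequality in which the inner sphere is replaced by a set of positive measure: there is $\alpha\in(0,1)$, depending only on $d$, on the ellipticity and Lipschitz constants, and on the density threshold $\tfrac12$, such that
\begin{equation*}
\sup_{B_{\rho_0}(x_0)}|v_t|\ \le\ C\,\Big(\sup_{E\cap B_{2\rho_0}(x_0)}|v_t|\Big)^{\alpha}\Big(\sup_{B_{4\rho_0}(x_0)}|v_t|\Big)^{1-\alpha}.
\end{equation*}
Since $v_t$ vanishes on $E\cap B_{2\rho_0}(x_0)$, the right-hand side is zero, hence $v_t\equiv 0$ on $B_{\rho_0}(x_0)$. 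As $x_0$ and $\rho_0$ do not depend on $t$ and the estimate is uniform on compact subintervals, I obtain that $u$ vanishes on $J\times B_{\rho_0}(x_0)$ for every $J\Subset I$, hence on $I\times B_{\rho_0}(x_0)$.

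Finally, $u$ vanishes on the nonempty space--time open set $I\times B_{\rho_0}(x_0)\subset Q$, so by the classical weak unique continuation theorem for parabolic operators with Lipschitz principal part and bounded lower order coefficients (\cite{SS}, or via parabolic Carleman estimates \cite{KT}), which propagates vanishing in the spatial variables across any spatial hypersurface, $u(t,\cdot)\equiv 0$ on the connected set $B_R$ for every $t\in I$; that is, $u\equiv 0$ on $Q$. The routine steps are the density--point reduction and the two unique continuation inputs. The real obstacle is the middle step: one must verify that the parabolic space--like frequency/doubling machinery supplies exactly the ingredients the Logunov--Malinnikova argument requires --- control of the doubling index of $v_t$, its propagation to sub--balls, and the attendant Cauchy/Harnack-type bounds --- and that all constants are uniform as $t$ ranges over a compact subinterval of $I$, so that one radius $\rho_0$ works for all times simultaneously. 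The mild blow-up of these constants as $t\to\pm T$ is harmless, since it suffices to run the argument on compact subintervals of $I$ and then exhaust $I$.
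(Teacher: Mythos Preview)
Your overall architecture---density point, local vanishing near $x_0$, then weak unique continuation via \cite{SS} or \cite{KT}---matches the paper. The genuine gap is exactly where you flag it: the middle step. The Logunov--Malinnikova propagation of smallness from sets of positive measure is proved for solutions of second order elliptic equations, and relies on the (almost-)monotonicity of the elliptic frequency function and the attendant doubling/sub-ball doubling hierarchy. Your time slice $v_t$ satisfies $-\cn(A(t,\cdot)\nabla v_t)=f_t$ with $f_t=-\partial_t u(t,\cdot)-b\cdot\nabla v_t-cv_t$, a right-hand side over which you have no pointwise control, and there is no result in the cited literature asserting that such a $v_t$ obeys the purely \emph{spatial} doubling, propagation of the doubling index to sub-balls, and Cauchy-type estimates that the \cite{LM} scheme consumes. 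The parabolic quantitative unique continuation of \cite{EFV} and \cite{KT} is genuinely a \emph{space--time} statement: it controls $\iint_{Q_{2r}}|u|^2$ by $\iint_{Q_r}|u|^2$, not $\int_{B_{2r}}|v_t|^2$ by $\int_{B_r}|v_t|^2$ at a fixed $t$. Converting this into a spatial doubling estimate uniform in $t$ is not routine and is not carried out; as written, your displayed three-ball inequality for $v_t$ is an unsubstantiated claim, and the proof is incomplete.

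The paper avoids this difficulty entirely by never working slice by slice and never invoking \cite{LM}. It uses instead an elementary Poincar\'e-type inequality due to Ladyzenskaya--Uraltseva (Lemma~\ref{lad}): if $v\in H^1(B_r)$ vanishes on $F\subset B_r$, then for any $A\subset B_r$,
\[
\mu(F)\int_A|v|^2\le 2\mu(A)\int_F|v|^2+C\,r^{d+1}\mu(A)^{1/d}\int_{B_r}|\nabla v|^2.
\]
Applied with $v=u(t,\cdot)$, $F=E\cap B_r(x_0)$, $A=E^c\cap B_r(x_0)$ and the density estimates at $x_0$, then integrated in $t$ over $I_r(t_0)$, this yields
\[
\iint_{Q_r(t_0,x_0)}|u|^2\le C\,\eps^{1/d}\,r^2\iint_{Q_r(t_0,x_0)}|\nabla_x u|^2.
\]
The Caccioppoli inequality (Proposition~\ref{Caccio}) turns the right side into $C\,\eps^{1/d}\iint_{Q_{2r}}|u|^2$, and the space--time doubling inequality of Escauriaza--Fern\'andez--Vessella (Theorem~\ref{EFV}) gives $\iint_{Q_{2r}}|u|^2\le D(N,u)\iint_{Q_r}|u|^2$. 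Choosing $\eps$ so small that $C\,\eps^{1/d}D(N,u)<1$ forces $u=0$ on $Q_r(t_0,x_0)$ for every $t_0\in I$, and then \cite{SS} or \cite{KT} finishes. The whole argument stays in space--time, so the only doubling used is the parabolic one already available in \cite{EFV}, and the only input from the vanishing set is the completely soft Lemma~\ref{lad}; no Logunov--Malinnikova machinery is needed.
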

\subsection{Preliminaries}
This result will be a consequence of several  results that we recall now. The first one is the well known  Cacciopoli inequality.  
\begin{prop} \label{Caccio}
  Let $u \in L ^2((-T,T), H ^2(B_R))$ satisfying $  Pu  =0  $ in $Q.$ One can find a constant $C>0$ depending only on $d$ and the ellipticity constant $\kappa$    such that for every $r >0$ such that $Q_{2r}(x_0,t_0) \subset Q$ we have,
$$\iint_{ Q_r(t_0, x_0)} \vert \nabla_x u(t,x)\vert^2\, dx\, dt \leq \frac{C}{r^2}\iint_{ Q_{2r}(t_0, x_0)} \vert u(t,x)\vert^2\, dx\, dt$$
\end{prop}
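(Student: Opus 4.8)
The plan is to run the classical parabolic energy (Caccioppoli) argument: test the identity $Pu=0$ against $\chi^2\bar u$ for a suitable cutoff and integrate over $Q$. Concretely, I would fix $\chi(t,x)=\chi_1(t)\chi_2(x)$ with $\chi_1\in C_c^\infty(I_{2r}(t_0))$ and $\chi_2\in C_c^\infty(B_{2r}(x_0))$ equal to $1$ on $I_r(t_0)$, resp. $B_r(x_0)$, and satisfying $0\le\chi_j\le1$, $|\chi_1'|\le C/r^2$, $|\nabla\chi_2|\le C/r$. Since $Q_{2r}(t_0,x_0)\subset Q$, the function $\chi^2\bar u$ vanishes near $\partial B_R$ and near the endpoints of $I$, so no boundary terms arise.

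Before performing any integration by parts I would first record that $\partial_t u\in L^2(Q)$. Indeed, $A$ being Lipschitz, $\mathrm{div}(A\nabla u)=\sum_{j,k}a_{jk}\,\partial_{x_j}\partial_{x_k}u+\sum_{j,k}(\partial_{x_j}a_{jk})\,\partial_{x_k}u\in L^2(Q)$, and likewise $b\cdot\nabla u+cu\in L^2(Q)$; hence $\partial_t u=\mathrm{div}(A\nabla u)-b\cdot\nabla u-cu\in L^2(Q)$. Consequently $u\in H^1\big((-T,T);L^2(B_R)\big)\cap L^2\big((-T,T);H^2(B_R)\big)\hookrightarrow C\big([-T,T];H^1(B_R)\big)$, which justifies both the use of $\chi^2\bar u$ as a test function and the integration by parts in $t$.

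Taking the real part of $\iint_Q (Pu)\,\chi^2\bar u=0$, the three contributions are handled as follows. The time term becomes $\mathrm{Re}\iint\chi^2\bar u\,\partial_t u=\tfrac12\iint\chi^2\,\partial_t|u|^2=-\tfrac12\iint\partial_t(\chi^2)\,|u|^2$, bounded by $C r^{-2}\iint_{Q_{2r}}|u|^2$. The principal term, after one integration by parts in $x$, equals $\iint\chi^2\langle A\nabla u,\nabla u\rangle+2\iint\chi\,\bar u\,\langle A\nabla u,\nabla\chi\rangle$; since $A$ is real symmetric and elliptic, $\mathrm{Re}\iint\chi^2\langle A\nabla u,\nabla u\rangle\ge\kappa\iint\chi^2|\nabla u|^2$, while the cross term is $\le\tfrac{\kappa}{4}\iint\chi^2|\nabla u|^2+C\kappa^{-1}\iint|\nabla\chi|^2|u|^2$ by Young's inequality. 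The lower-order terms give $\iint\chi^2\bar u\,(b\cdot\nabla u)+\iint\chi^2|u|^2 c$, the first being $\le\tfrac{\kappa}{4}\iint\chi^2|\nabla u|^2+\kappa^{-1}\|b\|_{L^\infty}^2\iint\chi^2|u|^2$. Absorbing the two $\tfrac{\kappa}{4}\iint\chi^2|\nabla u|^2$ terms on the left, using $|\nabla\chi|\le C/r$ and $|\partial_t(\chi^2)|\le C/r^2$, and noting that $r$ is bounded (so that $r^2(\|b\|_{L^\infty}^2+\|c\|_{L^\infty})$ is under control), one arrives at $\iint_{Q_r(t_0,x_0)}|\nabla_x u|^2\le C r^{-2}\iint_{Q_{2r}(t_0,x_0)}|u|^2$, which is the claim.

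There is no serious obstacle here; the only two points needing care are (i) the remark above that $\partial_t u\in L^2(Q)$, without which the formal multiplication and the time integration by parts would not be justified, and (ii) bookkeeping of the constant: strictly it also depends on $\sup_Q|A|$ (hence on the Lipschitz bound of $A$), on $\|b\|_{L^\infty}$, $\|c\|_{L^\infty}$, and — through the upper bound on the admissible $r$ — on $T$ and $R$; these lower-order contributions are inessential, and when $b=c=0$ and $\sup_Q|A|$ is normalised the constant depends only on $d$ and $\kappa$ as stated.
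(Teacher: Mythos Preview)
Your argument is correct and follows essentially the same route as the paper: multiply $Pu=0$ by $\chi^2\bar u$ for a space--time cutoff, integrate by parts in $t$ and $x$, use ellipticity and Young's inequality to absorb the cross terms, and conclude. Your extra care in justifying $\partial_t u\in L^2(Q)$ and in tracking the constant's dependence on $\|A\|_{L^\infty}$, $\|b\|_{L^\infty}$, $\|c\|_{L^\infty}$ is welcome and in fact sharpens a point the paper leaves implicit.
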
 
\begin{proof}
Without loss of generality one may assume that $(t_0,x_0)= (0,0).$ Let $\theta_1 \in C_0^\infty(\xR), \theta_1(t)= 1$ for $\vert t \vert \leq 1, \text{supp}\, \theta_1 \subset \{\vert t \vert \leq 2\}$ and $\theta_2 \in C_0^\infty(\xR^d), \theta_2(x)= 1$ for $\vert x \vert \leq 1, \text{supp}\, \theta_2 \subset \{\vert x \vert \leq 2\},$ $0 \leq \theta_j \leq 1.$ We set for small $r$, $\chi(t,x) = \theta_1(\frac{t}{r^2}), \theta_2(\frac{x}{r}).$   Then, 
\begin{equation}\label{caccio1}
  \vert \partial_t \chi(X)\vert + \vert \nabla_x \chi (X)\vert^2 \leq \frac{C}{r^2}, \quad X =(t,x).
  \end{equation}
Denoting by $((\cdot, \cdot))$ the scalar product in $L^2((-T,T)\times \xR^d)$ we have,
\begin{equation}\label{caccio2}
 ((Pu, \chi^2u))= ((F,\chi^2u)), \quad  \vert F \vert \leq C(\vert u \vert + \vert \nabla_x u\vert). 
\end{equation}
 
Now   and using the fact that $\chi$ has compact support in $(-T,T)\times B_R$ we have,
$$  \text{Re} \, ((\partial_t u, \chi^2 u))= \mez \iint  \chi^2(X) \frac{d}{dt}\vert u(X) \vert^2  \, dX = -   \iint  \chi(X) (\partial_t \chi)(X)  \vert u(X) \vert^2 \, dX, $$
and, 
 \begin{align*}
   -   \text{Re} \,((\text{div}\, (&A\nabla_x  u),    \chi^2 u ))\\
   &=   \text{Re} \iint A(X)\nabla_x u(X) \Big(\chi^2(X) \nabla_x u(X) + 2 \chi(X) \nabla_x \chi(X) u(X)\Big)\, dX. 
\end{align*}
Using \eqref{caccio2}, the hypothesis \eqref{ellipt} and the Cauchy-Schwarz inequality we obtain,
\begin{align*}
 &\kappa \Vert \chi\vert \nabla_x u\vert\Vert^2 \leq \frac{C}{r^2} \iint \chi(X)\vert u(X)\vert^2\, dX  + 2\Vert A\Vert_{L^\infty(Q)} \Big(\iint \vert \chi(X)\vert^2\vert \nabla_x u(X)\vert^2\, dX\Big)^\mez\\
  &  \cdot\Big(\iint \vert \nabla_x \chi(X)\vert^2 \vert u(X)\vert^2\, dX\big)^\mez + C\iint \chi^2(X) \vert u(X)\vert^2\, dX + C  \Big(\iint \vert \chi(X)\vert^2\vert \nabla_x u(X)\vert^2\, dX\Big)^\mez\\
  &\cdot \Big(\iint \vert \chi(X)\vert^2\vert   u(X)\vert^2\, dX\Big)^\mez .
  \end{align*}
Using the inequality $ab \leq \eps a^2 + \frac{1}{4\eps} b^2$, the estimates \eqref{caccio1}, and the fact that $r$ is small we obtain,
$$\mez \kappa \iint \vert \chi(X)\vert^2 \nabla_x u(X)\vert^2\, dX  \leq \frac{C'}{r^2} \iint _{I_{2r}\times Q_{2r}} \vert u(X)\vert^2\, dX.$$
The conclusion follows from the fact that $\chi(X) = 1$ on $I_r\times Q_r.$

\end{proof}
Notice that this Proposition appears in the work by P.Auscher, S.Bortz, M.Egert, O.Saari (\cite{ABES} Proposition 4.3).

The second one is a result   by L.Escauriaza, F.J.Fernandez, S.Vessela (\cite{EFV} Theorem 3).
\begin{theo}(\cite{EFV}) \label{EFV}
 Let $P$ be defined in \eqref{operateur} and let $u\in  L ^2((-T,T), H ^2(B_R))$ be a solution of $Pu=0$ in $Q.$  Set,  
$$ \Theta= \frac{\iint_{Q_4}\vert u(t,x)\vert^2\, dx\, dt}{\int_{B_1(x_0)} \vert u(0,x)\vert^2\, dx}.$$ 
Then there exists $N= N(d, \kappa)>0$ such that the following holds when $0<r< (N \text{Log}(N \Theta))^{-\mez},$
$$\iint_{Q_{2r}(t_0, x_0)} \vert u(t,x)\vert^2\, dx \, dt \leq D(N,u)\iint_{Q_{r}(t_0, x_0)} \vert u(t,x)\vert^2\, dx \, dt$$
where $D(N,u)= \text{exp}\big( N \text{Log}(N\Theta) \text{Log}( N\text{Log}(N\Theta)\big).$
\end{theo}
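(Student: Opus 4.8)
To prove Theorem \ref{EFV} the plan is to derive the quantitative doubling inequality from a family of Carleman estimates for $P$ localized near $(t_0,x_0)$, the scheme that produces three-cylinder interpolation inequalities in the parabolic setting. After translating so that $(t_0,x_0)=(0,0)$ and performing a linear change of spatial coordinates to arrange $A(0,0)=\mathrm{Id}$ (which preserves \eqref{ellipt} and the Lipschitz bound, with constants depending only on $d,\kappa$), one works with a Carleman weight $e^{2\tau\phi}$ in which $\phi(t,x)$ is singular in time at $t=0$ and Gaussian in the space variables --- built from a profile $\sigma(t)\sim -t$ near $0$ together with $|x|^2/\sigma(t)$, the natural weight adapted to the backward heat operator. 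On a cylinder $Q_\rho$ the principal-part defect $A(t,x)-\mathrm{Id}$ is $O(\rho)$, and the terms $b_j\partial_{x_j}u+cu$ are of lower order under parabolic scaling, so both can be absorbed by the leading part of the estimate once $\rho$ --- hence the radius $r$ below --- is small; this correlation between the admissible radius and the size of $\tau$ is what forces the upper bound on $r$.

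First I would record the basic Carleman inequality: for $v$ suitably supported in a cylinder,
$$\tau^{3}\iint\sigma\,|v|^2\,e^{2\tau\phi}\,dx\,dt+\tau\iint\sigma^{-1}\,|\nabla_x v|^2\,e^{2\tau\phi}\,dx\,dt\;\le\;C\iint|Pv|^2\,e^{2\tau\phi}\,dx\,dt,\qquad\tau\ge\tau_0(r).$$
Applying it to $v=\chi u$ with $\chi$ a cutoff equal to $1$ on $Q_r$ and supported in $Q_{2r}$, the source $P(\chi u)=[P,\chi]u$ is supported in the annular region $Q_{2r}\setminus Q_r$ and involves there only $u$ and $\nabla_x u$; Proposition \ref{Caccio} converts the $\nabla_x u$ contribution into an $L^2$ norm of $u$ on a slightly larger cylinder. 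Comparing the weight on $Q_{r/2}$, $Q_r$ and $Q_{2r}$ and optimizing in $\tau$ then yields an interpolation inequality of the form
$$\iint_{Q_r}|u|^2\;\le\;C\Big(\iint_{Q_{2r}}|u|^2\Big)^{1-\theta}\Big(\iint_{Q_{r/2}}|u|^2\Big)^{\theta},\qquad\theta=\theta(d,\kappa)\in(0,1),$$
but with a loss that degrades as one passes from one dyadic scale to the next; it is the accumulation of this genuinely parabolic loss over the $\sim\log(1/r)$ dyadic scales between unit size and $r$ that makes the doubling constant grow --- in contrast with the elliptic case, where it stays uniformly bounded --- and that dictates the limited range of admissible $r$.

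The quantity $\Theta$ enters because it measures the frequency of $u$ at unit scale: $\log\Theta$ is comparable to the Poon-type parabolic frequency $\nu(1)$ of $u$ at $(0,0)$, so the bulk norm on $Q_4$ and the slice norm on $B_1(x_0)$ at $t=t_0$ are linked by $e^{C\nu(1)}$. The interpolation inequalities can be iterated --- the frequency staying $\lesssim\nu(1)+1$ --- only as long as the scale $r$ exceeds the threshold $(N\log(N\Theta))^{-1/2}$; below it the Lipschitz and zero-order defects cease to be controllably subordinate. Writing $M=N\log(N\Theta)$, the argument is valid for $r$ below $M^{-1/2}$, and the accumulated doubling constant is $\exp(M\log M)=D(N,u)$, the double logarithm being exactly the cost of summing the per-scale parabolic losses across those $\sim\log(1/r)$ scales. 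An essentially equivalent route, and the one I would keep in reserve, is to run Poon's parabolic frequency function directly with the backward heat kernel $G(x,t)=(-t)^{-d/2}e^{|x|^2/4t}$ as weight: establish the almost-monotonicity $\frac{d}{ds}\nu(s)\ge-C(1+\nu(s))$ for the perturbed operator after parabolic rescaling, integrate it from scale $1$ down to $r$, and read off both the doubling bound and the threshold.

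The main obstacle is establishing the Carleman estimate (or the frequency monotonicity) with a principal part that is only Lipschitz: one cannot integrate by parts against $\nabla_x A$, so the convexity construction behind $\phi$ must be arranged so that every term carrying a derivative of $A$ also carries a factor vanishing at $(0,0)$ and is therefore reabsorbed at small scale --- this is precisely where the geometry of $\phi$ and the smallness of $r$ are forced. The second, more bookkeeping-heavy, difficulty is tracking the constants sharply enough to obtain the stated double-logarithmic dependence rather than a cruder iterated exponential: one must check that the errors coming from the Lipschitz principal part and from $b_j,c$ are of lower order under the rescaling $u_r(t,x)=u(r^2t,rx)$, so that iterating over $\sim\log(1/r)$ scales costs only a power of a logarithm. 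The lower-order terms $b_j\partial_{x_j}u+cu$ are otherwise harmless, being subordinate in the parabolic scaling from the outset.
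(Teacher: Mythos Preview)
The paper does not prove Theorem~\ref{EFV} at all: it is quoted verbatim as Theorem~3 of Escauriaza--Fern\'andez--Vessella~\cite{EFV} and used as a black box in the proof of Theorem~\ref{unicite}. So there is no ``paper's own proof'' to compare against; the authors simply invoke the result.

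Your proposal is not so much a proof as a roadmap of the strategy that \cite{EFV} (and earlier Poon, Escauriaza--Fern\'andez) actually follow: a Carleman estimate with a weight modelled on the backward heat kernel, or equivalently almost-monotonicity of a parabolic frequency function, yielding three-cylinder interpolation inequalities that one iterates across dyadic scales. As an outline this is accurate in spirit, and you correctly identify the two genuine difficulties --- handling a merely Lipschitz principal part in the convexity/commutator step, and tracking constants sharply enough to land on $\exp(M\log M)$ rather than a cruder bound. But none of the hard analytic work is done: the Carleman inequality is asserted, not proved; the ``optimizing in $\tau$'' step that produces the interpolation inequality is gestured at; and the iteration with controlled constants, which is where the double logarithm actually emerges, is described only qualitatively. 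If you were writing a self-contained argument you would need to either carry out the Carleman computation in full (integration by parts, positivity of the leading symbol, absorption of the Lipschitz error terms at small scale) or establish the differential inequality $\nu'(s)\ge -C(1+\nu(s))$ for the frequency with explicit treatment of the perturbation terms. As it stands, what you have written is a correct summary of the method of \cite{EFV}, not an independent proof.
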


The third result  is  the following lemma,  a kind of Poincar\'e inequality,  which  can be found in  the book by O. Ladyzenskaya, N.Uraltseva \cite{LU}.

\begin{lemm} (\cite{LU}) \label{lad}
Let   $x_0\in \xR^d$ and $r>0.$, Let $F$ be a measurable subset of $B_r(x_0)$ with $\mu(F)>0.$ Then  for every   $v \in H^1(B_r(x_0))$   and for every measurable set $A \subset B_r(x_0)$ we have,
$$\mu(F)\int_A\vert v(x)\vert^2\, dx \leq  2\mu(A)\int_F\vert v(x)\vert^2\, dx+ C\, r^{d+1}\,  \mu(A)^{\frac{1}{d}} \int_{B_r(x_0)}\vert \nabla_x v(x)\vert^2\, dx,$$
where $C =  \frac{ 2 ^{d+3}}{d+1} \big(\mu(S^{d-1})\big)^{1- \frac{1}{d}}.$
\end{lemm}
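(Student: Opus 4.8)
The plan is to prove the inequality first for $v\in C^1(\overline{B_r(x_0)})$ and then recover the general case by density: $C^1(\overline{B_r(x_0)})$ is dense in $H^1(B_r(x_0))$, and each of the three integrals appearing in the statement is continuous for the $H^1(B_r(x_0))$ norm, so the inequality passes to the limit. Thus from now on $v$ is of class $C^1$.

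\emph{Step 1 (reduction to a double integral).} For $x\in B_r(x_0)$ write $v(x)=\frac{1}{\mu(F)}\int_F\big(v(y)+(v(x)-v(y))\big)\,dy$ and apply the Cauchy--Schwarz inequality in $y$ to each of the two resulting averages; this gives the pointwise bound
$$\mu(F)\,|v(x)|^2\le 2\int_F|v(y)|^2\,dy+2\int_F|v(x)-v(y)|^2\,dy .$$
Integrating over $x\in A$, the lemma is reduced to the estimate
$$I:=\iint_{A\times F}|v(x)-v(y)|^2\,dy\,dx\ \le\ \tfrac12\,C\,r^{d+1}\,\mu(A)^{1/d}\int_{B_r(x_0)}|\nabla_x v(x)|^2\,dx .$$

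\emph{Step 2 (from increments to a Riesz potential).} Since $B_r(x_0)$ is convex, for $x,y$ in it one has $v(y)-v(x)=\int_0^1\nabla_x v(x+s(y-x))\cdot(y-x)\,ds$, hence, by Cauchy--Schwarz in $s$, $|v(x)-v(y)|^2\le |x-y|^2\int_0^1|\nabla_x v(x+s(y-x))|^2\,ds$. Insert this in $I$; for fixed $x$ and $s\in(0,1)$ perform the change of variables $w=(1-s)x+sy$, under which $dy=s^{-d}\,dw$, $|x-y|=|w-x|/s$, and $w$ ranges over a subset of $B_r(x_0)\cap\{|w-x|<2rs\}$ (convexity of the ball, together with $|y-x|<2r$). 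Exchanging the order of integration (Tonelli, all integrands being non-negative) and integrating in $s$ first — the constraint $|w-x|<2rs$ forces $s>|w-x|/(2r)$, which makes $\int s^{-d-2}\,ds$ convergent and produces the factor $\tfrac{1}{d+1}\big(2r/|w-x|\big)^{d+1}$ — one arrives at
$$I\le \frac{(2r)^{d+1}}{d+1}\int_{B_r(x_0)}|\nabla_x v(w)|^2\Big(\int_A\frac{dx}{|w-x|^{d-1}}\Big)\,dw .$$

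\emph{Step 3 (the potential estimate and bookkeeping).} For fixed $w$ the function $x\mapsto|w-x|^{-(d-1)}$ is radially non-increasing about $w$, so by the Hardy--Littlewood rearrangement inequality $\int_A|w-x|^{-(d-1)}\,dx$ does not exceed the same integral over the ball $B_\rho(w)$ with $\mu(B_\rho(w))=\mu(A)$, i.e. $\rho=\big(d\,\mu(A)/\mu(S^{d-1})\big)^{1/d}$; computing this in polar coordinates gives exactly $\mu(S^{d-1})\,\rho=d^{1/d}\,\mu(S^{d-1})^{1-1/d}\,\mu(A)^{1/d}$. Combining Steps 1--3 and using $d^{1/d}<2$ for every integer $d\ge1$ (equivalently $\log d<d\log 2$) yields the claimed inequality with $C=\dfrac{2^{d+3}}{d+1}\big(\mu(S^{d-1})\big)^{1-1/d}$.

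The main obstacle is Step 2: one has to track the geometry carefully — identifying the exact region covered by $w$ after the substitution — so that the a priori divergent weight $s^{-d-2}$ is integrated only over $s\gtrsim|w-x|/r$, and the Tonelli exchanges must be arranged so that the resulting bound scales precisely as $r^{d+1}\mu(A)^{1/d}$ with the stated constant. The rearrangement step is the conceptual heart of the argument, but it is entirely classical, and the density reduction at the start is routine.
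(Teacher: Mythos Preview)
Your proof is correct and follows essentially the same route as the paper's: reduce to smooth $v$, apply the fundamental theorem of calculus along segments together with Cauchy--Schwarz to reach the Riesz potential $\int_A|w-x|^{-(d-1)}\,dx$, and then bound this potential by a constant times $\mu(A)^{1/d}$. The only differences are cosmetic: the paper parametrises the segment by arc-length and passes to polar coordinates $y=x+t\omega$ (with Fubini in $(t,\rho)$) instead of your substitution $w=(1-s)x+sy$ (with Tonelli in $(s,w)$), and it bounds the potential by splitting $A$ at a radius $\delta$ and optimising in $\delta$, whereas you use the (slightly sharper) bathtub/rearrangement bound and then absorb the factor $d^{1/d}<2$ into the stated constant.
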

For the reader's convenience we give the proof of this lemma.
\begin{proof}[Proof of Lemma \ref{lad}]
It is of course enough to prove this lemma for $x_0 = 0$ and for $v\in C^\infty(\overline{B_r})$  where $B_r = B_r(0).$ Let $y\in F$ and $x \in A, x \neq y.$  We have,
\begin{align*}
 v(y)-v(x)  & =  \int_{0}^{\vert y-x\vert} \frac{d}{d\rho}\Big(v(x+ \rho \frac{y-x}{\vert y-x\vert}\Big)\, d\rho,\\
 &= \int_{0}^{\vert y-x\vert}\nabla_x v\Big(x+ \rho \frac{y-x}{\vert y-x\vert}\Big)\cdot \frac{y-x}{\vert y-x\vert}\, d\rho.
 \end{align*}
 Using the Cauchy-Schwarz inequality we obtain,
 $$\vert v(x)\vert^2 \leq 2 \vert v(y)\vert^2+ 2\vert y-x\vert \int_{0}^{\vert y-x\vert} \la \nabla_x v\Big(x+ \rho \frac{y-x}{\vert y-x\vert}\Big)\ra^2\, d\rho.$$
 Integrating this inequality  for  $y\in F$ we get,
 \begin{equation}\label{ineg1}
  \mu(F) \vert v(x)\vert^2 \leq 2 \int_F\vert v(y)\vert^2\, dy+  \int_{B_r}\vert y-x\vert \int_{0}^{\vert y-x\vert} \la \nabla_x v\Big(x+ \rho \frac{y-x}{\vert y-x\vert}\Big)\ra^2\, d\rho\, dy.
  \end{equation}
 We would like to set, in the second integral in the right hand side,  $y= x+t\omega, t>0, \omega\in S^{d-1}.$ We first prove that, for $x \in B_r$,
 \begin{equation}\label{equiv}
  y= x+t\omega \in B_r,  t>0, \omega\in S^{d-1}\Longleftrightarrow y= x+t\omega, 0<t<t^*(x, \omega),
  \end{equation}
 with $0<t^*\leq 2r.$
 
 Indeed $y \in B_r$ is equivalent  to $\vert x+t\omega\vert^2 < r^2$ thus to $t^2 + 2(x\cdot\omega)t + \vert x\vert^2-r^2<0.$ The reduced discriminant of this second order polynomial in $t$ is equal to $\Delta = (x \cdot \omega)^2+ r^2-\vert x \vert^2$ which is strictly positive. Therefore this polynomial has two real roots  and since their product is $\vert x \vert^2-r^2<0,$ there are one positive root  $t^*$ and one negative root $t_*$. Therefore this polynomial is strictly negative if and only if $0<t<t^*.$  Moreover $t^*= - x \cdot \omega + \sqrt{ (x \cdot \omega)^2+ r^2-\vert x \vert^2}$ therefore $0<t^* \leq \vert x \vert + r \leq 2r.$
 
 Making the change of variable $y= x+t\omega$ in the second integral in the right hand side of \eqref{ineg1} we obtain,
 $$ \mu(F) \vert v(x)\vert^2 \leq 2 \int_F\vert v(y)\vert^2\, dy + 2\int_{S^{d-1}} \int_{t=0}^{t^*}t \int_{\rho = 0}^t \vert \nabla_x v(x+ \rho \omega)\vert^2 t^{d-1}\, d\rho \,  dt\, d \omega.$$
 By the Fubini Theorem one can write,
 $$ \mu(F) \vert v(x)\vert^2 \leq 2 \int_F\vert v(y)\vert^2\, dy +  \int_{S^{d-1}}\int_{\rho=0}^{t^*} \Big( \int_{t= \rho}^{t^*} t^d \, dt\Big)\vert \nabla_x v(x+ \rho \omega)\vert^2 \, d\rho\, d\omega.$$
 Since $t^* < 2r$ we obtain,
 $$ \mu(F) \vert v(x)\vert^2 \leq 2 \int_F\vert v(y)\vert^2\, dy +  \frac{ 2 ^{d+2}}{d+1} r^{d+1} \int_{S^{d-1}}\int_{\rho=0}^{t^*}  \frac{\vert \nabla_x v(x+ \rho \omega)\vert^2}{\rho^{d-1} } \, \rho^{d-1} d\rho\, d\omega.$$
Setting $z = x + t \omega$ and using again \eqref{equiv} we obtain,
$$\mu(F) \vert v(x)\vert^2 \leq 2 \int_F\vert v(y)\vert^2\, dy + \frac{ 2 ^{d+2}}{d+1} r^{d+1} \int_{B_r} \frac{\vert \nabla_x v(z)\vert^2}{\vert z-x\vert^{d-1}}\, dz.$$
 Integrating both members of this inequality with respect to $x$ on $A$ we get,
 \begin{equation}\label{ineg2}
  \mu(F) \int_A \vert v(x)\vert^2\, dx \leq  2 \mu(A)\int_F\vert v(y)\vert^2\, dy +\frac{ 2 ^{d+2}}{d+1} r^{d+1}\int_{B_r} \vert \nabla_x v(z)\vert^2 g(z)\, dz,
  \end{equation}
 where $g(z) = \int_{A}  \frac{1}{\vert z-x\vert^{d-1}}\, dx.$ 
 
 To compute $g$ let  $\delta >0$ and set $A_1= \{x \in A: \vert z-x\vert < \delta\}, A_2= \{x \in A: \vert z-x\vert > \delta\}.$ We have,
 \begin{align*}
 &\int_{A_1}  \frac{1}{\vert z-x \vert^{d-1}}\, dx \leq \int_{\vert z-x\vert<\delta}  \frac{1}{\vert z-x \vert^{d-1}} \, dx \leq \mu(S^{d-1}) \, \delta,\\
 &\int_{A_2}  \frac{1}{\vert z-x\vert^{d-1}}\, dx \leq \delta^{-(d-1)} \mu(A).
 \end{align*}
We then chose $\delta$ such that $\mu(S^{d-1}) \, \delta=\delta^{-(d-1)} \mu(A),$ that is, $ \delta = \Big(\frac{\mu(A)}{\mu(S^{d-1}) }\Big)^{\frac{1}{d}}$ and we obtain the estimate,
$$g(z) \leq 2 \big(\mu(S^{d-1})\big)^{1- \frac{1}{d}} \mu(A)^{\frac{1}{d}}.$$
 Using \eqref{ineg2} we obtain eventually,
 $$ \mu(F) \int_A \vert v(x)\vert^2\, dx \leq   2\mu(A) \int_F\vert v(y)\vert^2\, dy+ \frac{ 2 ^{d+3}}{d+1} \big(\mu(S^{d-1})\big)^{1- \frac{1}{d}}r^{d+1} \mu(A)^{\frac{1}{d}}\int_{B_r} \vert \nabla_x v(z)\vert^2\, dz.$$

\end{proof}
\subsection{Proof of Theorem \ref{unicite}}
We shall use the method described in R.Regbaoui   \cite{R}.

We know that almost every point of $E$ is of  density     $1.$  If $x_0$ is such a point this means that,
$$\lim_{r \to 0} \frac{\mu(E \cap B_r(x_0))}{\mu(B_r(x_0))} = 1.$$
  Let $\eps \in (0,\mez)$ then there exists $r_0>0$ such that for $r\leq r_0$,
\begin{equation}\label{densite}
 \mu(E\cap B_r(x_0)) \geq (1-\eps) \mu(B_r(x_0)) , \quad \mu(E^c\cap B_r(x_0)) \leq  \eps \mu(B_r(x_0)).\end{equation}
We apply Lemma \ref{lad} to $v(x)= u(t,x), F = E\cap B_r(x_0), A= E^c\cap B_r(x_0).$ Since $u(t,x)=0$ for $x\in E$, we obtain,
$$
 \int_{B_r(x_0)} \vert u(t,x)\vert^2\, dx  = \int_{E^c \cap B_r(x_0)} \vert u(t,x)\vert^2\, dx 
   \leq   C_d r^{d+1} \frac{\mu(E^c \cap B_r(x_0))^{\frac{1}{d}}}{\mu(E \cap B_r(x_0))}\int_{B_r(x_0)} \vert \nabla_x u(t,x)\vert^2\, dx.
$$
Using \eqref{densite} we obtain, 
\begin{align*}
   \int_{B_r(x_0)} \vert u(t,x)\vert^2\, dx  &\leq 2C_d \,r^{d+1} \eps^{\frac{1}{d}}   \mu(B_r(x_0))^{1-\frac{1}{d}}\int_{B_r(x_0)} \vert \nabla_x u(t,x)\vert^2\, dx,\\ 
    &\leq C'_d \, r^2  \eps^{\frac{1}{d}}   \int_{B_r(x_0)} \vert \nabla_x u(t,x)\vert^2\, dx,
    \end{align*}

  since $\mu(B_r(x_0))= c_d r^d.$ 
  
  Integrating both members of this  inequality with respect  to $t$ on $I_r(t_0)= (t_0-r^2, t_0+ r^2)$ where $t_0$ is an arbitrary point of $I,$   we obtain, 
$$ \iint_{Q_r(t_0,x_0)} \vert u(t,x)\vert^2 \, dx\, dt \leq C'_d  \, \eps^{\frac{1}{d}}  r^2  \iint_{Q_r(t_0,x_0)} \vert \nabla_x u(t,x)\vert^2 \, dx\, dt.$$
Using Theorem \ref{Caccio} we deduce that, for $r \leq r_0,$
\begin{equation}\label{Ausch}
  \iint_{Q_r(t_0,x_0)} \vert u(t,x)\vert^2 \, dx\, dt \leq C''_d   \,\eps^{\frac{1}{d}} \iint_{Q_{2r}(t_0,x_0)} \vert  u(t,x)\vert^2 \, dx\, dt.
  \end{equation}
 
By Theorem \ref{EFV}   there exists $N= N(d,\kappa)$ such that when $0<r< (N\text{Log}(N \Theta))^{-\mez}$ we have,
$$\iint_{Q_{2r} (t_0, x_0)} \vert u(t,x)\vert^2\, dx\, dt \leq D(N,u)\iint_{Q_{r} (t_0, x_0)} \vert u(t,x)\vert^2\, dx\, dt,$$
where $D(N,u) = e^{N\text{Log}(N \Theta)\text{Log}(N\text{Log}(N \Theta))}.$ 

Combining this inequality with \eqref{Ausch}, we obtain,
$$ \iint_{Q_r(t_0,x_0)} \vert u(t,x)\vert^2 \, dx\, dt \leq C''_d   \,\eps^{\frac{1}{d}} D(N,u)\iint_{Q_{r} (t_0, x_0)} \vert u(t,x)\vert^2\, dx\, dt.$$
Taking $\eps \in (0, \mez)$ so small that, $C''_d  \eps^{\frac{1}{d}} D(N,u)<1$ we deduce from the above inequality that there exists $r>0$ such that $ u= 0$ in $Q_{r} (t_0, x_0).$  This argument holds for every $t_0 \in (-T,T).$ Therefore applying Theorem 1.1 in \cite{SS} or Theorem 1 in \cite{KT} we deduce that $u=0$ in $Q.$

\addcontentsline{toc}{section}{Bibliography}

\end{document}